\newcommand{\abs}[1]{\left\lvert#1\right\rvert}
\newcommand{\Z}{\ensuremath{\mathbb{Z}}}
\newcommand{\R}{\ensuremath{\mathbb{R}}}
\newcommand{\C}{\ensuremath{\mathbb{C}}}
\newcommand{\Q}{\ensuremath{\mathbb{Q}}}
\renewcommand{\P}{\ensuremath{\mathbb{P}}}
\renewcommand{\bar}[1]{\overline{#1}}
\DeclareMathOperator{\num}{N}
\DeclareMathOperator{\Id}{Id}
\DeclareMathOperator{\GL}{GL}
\DeclareMathOperator{\SL}{SL}
\newcommand{\KK}{\mathbb{K}}
\newcommand{\PP}{\mathbb{P}}
\newcommand{\QQ}{\mathbb{Q}}
\newcommand{\ZZ}{\mathbb{Z}}
\theoremstyle{plain}
\newtheorem{theorem}{Theorem}
\numberwithin{theorem}{section}
\newtheorem{thm}[theorem]{Theorem}
\newtheorem{lem}[theorem]{Lemma}
\newtheorem{question}[theorem]{Question}
\theoremstyle{definition}
\newtheorem{Definition/Theorem}[theorem]{Definition/Theorem}
\newtheorem{Definition/Proposition}[theorem]{Definition/Proposition}
\newtheorem{Def}[theorem]{Definition}
\newtheorem{ex}[theorem]{Example}
\newtheorem{Corollary/Definition}[theorem]{Corollary/Definition}
\theoremstyle{remark}
\newtheorem{rem}[theorem]{Remark}
\newcommand{\Com}{TCom}
\newcommand{\T}{\mathbb{T}}
\renewcommand{\setminus}{\smallsetminus}
\DeclareMathOperator{\PD}{PD}
\newcommand{\ratt}{
\tikz[minimum height=2ex]
  \path[dashed,->]
   node (a)            {}
   node (b) at (1em,0) {}
  ($(a.center)+(0,.0)$) edge ($(b.center)+(0,.0)$)
  ($(a.center)+(0,-.14)$) edge ($(b.center)+(0,-.14)$);
}
\begin{document}

\title{Dynamical invariants of monomial correspondences}
\author{Nguyen-Bac Dang, Rohini Ramadas}
\email{nguyen-bac.dang@stonybrook.edu, rohini\_ramadas@brown.edu}
\thanks{This work was partially supported by NSF grant 1703308.}
\subjclass[2010]{14H10 (primary), 14N99, 14M99, 37F05} 

\begin{abstract}
We focus on various dynamical invariants associated to monomial correspondences on toric varieties, using algebraic and arithmetic geometry. We find a formula for their dynamical degrees, relate the exponential growth of the degree sequences with a strict log-concavity condition on the dynamical degrees and compute the asymptotic rate of the growth of heights of points of such correspondences.
   \end{abstract}
\maketitle


\section*{Introduction}\label{sec:Intro}

Let $\KK$ be an algebraically closed field of characteristic zero, $X$ be a smooth $n$-dimensional quasiprojective variety over $\KK$. A \textit{rational correspondence} $g$ on $X$ is a quasiprojective variety $\Gamma_f$ together with two maps $\pi_1$ and $\pi_2$ to $X$, such that both maps are dominant when restricted to every irreducible component of $\Gamma_f$. Fix a normal compactification $\bar X$ of $X$, a compactification $\bar\Gamma_f'$ of $\Gamma_f$ extending the maps $\pi_1$ and $\pi_2$ to $\bar X$, and a desingularization $\bar \Gamma_f$ of  $\bar \Gamma_f'$.

Rational correspondences can be iterated (see Section \ref{sec:RationalCorrespondences}); we denote by $f^p$ the $p$-th iterate of the correspondence $f$. There are invariants of various nature associated to the dynamical system induced by a given correspondence. They can be transcendental or algebraic, or arithmetic if $\KK$ is $\bar\QQ$. 

Let us present a first invariant defined purely using algebraic geometry.
Fix an ample divisor $D$ on $\bar X$ and an integer $k\leqslant n$, the $k$-degree of the correspondence $g$ with respect to $\bar X $ and $D$ is the intersection number:
\begin{equation*}
\deg_k(f) :=( \pi_1^* D^{n-k} \cdot \pi_2^* D^k )
\end{equation*}
on $\bar \Gamma_f$.
The asymptotic behavior of the sequence $(\deg_k(f^p))_p$ roughly measures the algebraic complexity of the iterates of $f$. These degree sequences have been studied most in the case when $\Gamma_f$ is the graph of a rational map $f$.
 The growth of the sequence of degrees is an essential tool when one studies the group of birational transformations  (see \cite{gizatullin_80}, \cite{DillerFavre2001}, \cite{cantat_bir_surfaces}, \cite{blanc_deserti}, \cite{BlancCantat2016} for surfaces,  \cite{dinh_sibony_2004_groupes} for the study of commutative automorphism groups in dimension $\geq 3$ and \cite{cantat_zeghib_2012}, \cite{zhang_2014} for some characterizations of positive entropy automorphisms in higher dimension).

%
When $\KK$ is the field of complex numbers, Dinh-Sibony \cite{DinhSibony2005, DinhSibony_correspondence} proved that the sequence of degrees is submultiplicative and this result was recently extended to a field of arbitrary characteristic \cite{Truong2016,dang2017degrees}. 
As a result, Fekete's lemma implies that the asymptotic ratio of the sequence $\deg_k(g^p)$, called the $k$-th dynamical degree of $g$ and denoted $\lambda_k(g)$ is well-defined and is equal to the limit
\begin{equation*}
\lambda_k(g) :=\lim_{p\rightarrow +\infty} \deg_k(g^p)^{1/p}.
\end{equation*}
These numbers were first defined for rational maps over the complex projective space by Russakovski-Shiffman \cite{russakovskii_shiffman} and are in general birational invariants \cite{Truong2016}.  
Dynamical degrees are also a key ingredient in the construction of ergodic invariant measures (\cite{bedford_smillie_3}, \cite{sibony_1999},\cite{dinh_sibony_dyn_reg_Pk},  \cite{guedj_2005}, \cite{de_thelin_vigny_2010}, \cite{dinh_sibony_2010}, \cite{diller_dujardin_guedj_2011_II}).

The quantity analogous to the dynamical degree in the analytic setting is the topological entropy --- the exponential growth rate of the number of $(n,\epsilon)$ separated orbits that avoid the indeterminacy locus of a given correspondence. 
These two invariants are closely related as the dynamical degrees control the topological entropy (\cite{Gromov2003, DinhSibony2005, DinhSibony_correspondence}) and the equality between the topological entropy and the logarithm of its largest dynamical degree is achieved for holomorphic maps \cite{Yomdin1987}. 
%

Like the entropy, the dynamical degrees are also difficult to compute in general. While the $n$th dynamical degree $\lambda_n$ is the topological degree of $g$, the issue when $1 \leqslant k<n$ is that  $g$ has in general a non-empty indeterminacy locus and hence the pullback $(g^p)^*$ on the Neron Severi group need not be equal to $(g^*)^p$. Thus computing $\lambda_k$ involves computing infinitely many potentially unrelated pullback maps. 
Therefore, they have only been computed in low dimension or for maps which preserve  certain geometric constraints: they are known for regular morphisms, for birational maps of surfaces \cite{DillerFavre2001}, for endomorphisms of the affine plane \cite{FavreJonsson2011}, for monomial maps \cite{lin,favre_wulcan}, for birational maps of hyperk\"ahler varieties \cite{bianco} and for Hurwitz correspondences (a class of mappings and correspondences obtained from Teichm\"uller theory in the work of Koch \cite{Koch2013}) \cite{KochRoeder2015, Ramadas2015, Ramadas2016}. The properties of dynamical degrees were studied for particular classes of birational transformations of $\PP^3$ \cite{deserti_han,cerveau_deserti,deserti_leguil,dang_tame}. Due to this difficulty, there are many open questions. Until recently, it was not known whether every dynamical degree is an algebraic integer; however just this year Bell-Diller-Jonsson \cite{BellDillerJonsson2019} have found a map with a transcendental dynamical degree. 
 
 In contrast to the general situation, the dynamical degrees and the sequence of degrees of monomial maps are well understood \cite{bedford_kim_linear,lin,favre_wulcan, JonssonWulcan2011,dang_xiao}. Fix $n>0$, and denote by $\T$ the $n$-dimensional torus $(\KK^*)^n$. Let $M=(M_{ij})$ be an $n\times n$ integer matrix. Then $M$ induces a monomial self-map $\phi(M)$ of $\T$, sending $(x_1,\ldots x_n)$ to $(\prod_jx_{j}^{M_{1j}},\ldots,\prod_{j}x_j^{M_{nj}})$. If $M$ is non-singular then $\phi(M)$ is dominant, with topological degree $\det(M)$. The $k$-th dynamical degree of $\phi(M)$ is 
 \begin{equation}
|\rho_1| \cdot \ldots \cdot |\rho_k|, 
\end{equation}
where $\rho_1, \ldots ,\rho_k$ are the $k$ largest eigenvalues of $M$.
 
Now let $M$ and $N$ be two nonsingular integer matrices. Then we have two dominant maps, $\phi(M)$ and $\phi(N)$, both from $\T$ to $\T$. This induces a rational correspondence on $\T$; we call such a correspondence a \textit{monomial correspondence}. Here, we compute the dynamical degrees of monomial correspondences. We show:
\medskip
 
\noindent \textbf{Theorem A}. 
  For any two $n\times n$ integer matrices $M$ and $N$ with non-zero determinant, the $k$-dynamical degree of the correspondence $(\T, \phi(M),\phi(N))$ is equal to 
\begin{equation*}
|\det(M)| |\rho_1| \cdot \ldots \cdot |\rho_k|, 
\end{equation*}
where $\rho_1, \ldots ,\rho_k$ are the $k$ largest eigenvalues of the matrix $N \cdot M^{-1}$. 
\bigskip

We obtain further information on the growth of the degrees when the sequence $k \mapsto \lambda_k(f)$ is locally  strictly log-concave.
\medskip

\noindent \textbf{Theorem B}. 
 Fix two $n\times n$ integer matrices $M$ and $N$ with non-zero determinant and take $f$ the  monomial correspondence $f := (\T, \phi(M), \phi(N))$. Suppose that $\lambda_l^2(f) > \lambda_{l+1}(f) \lambda_{l-1}(f)$ for an integer $1\leqslant l \leqslant n$, then there exists a constant $C>0$ and an integer $r$ such that 
\begin{equation*}
\deg_l (f^p) = C \lambda_l(f)^p + O\left (p^r \left ( \dfrac{\lambda_{l-1}(f) \lambda_{l+1}(f)}{\lambda_l(f)}\right )^p \right )
\end{equation*}
\bigskip

We find an appropriate toric compactification $Y$ on which the pullback on the $2k$ cohomology is functorial (i.e $(f^p)^* = (f^*)^p $ on $H^{2k}(Y(\C))$ for an embedding of $\KK$ into $\C$). We say that $f$ is $k$-stable when this happens.
\medskip

\noindent \textbf{Theorem C}. 
Fix two $n\times n$ integer matrices $M$ and $N$ with non-zero determinant and take $f$ the  monomial correspondence $f := (\T, \phi(M), \phi(N))$.
Suppose that the eigenvalues of the matrix $N M^{-1}$ are all real, distinct and positive. Then there exists a projective toric variety whith at worst quotient singularities on which $f$ is algebraically $k$-stable.
\bigskip

Finally, we describe the monomial correspondences by computing these invariants using arithmetic tools.
Recall that given an ample divisor $D$ on $\PP^n$, one can associate a function $h_D : \PP^n(\Q) \to \mathbb{R}^+$ called a Weil height. 
These heights are essential  to understand rational points and integral points on algebraic varieties. More precisely, they are used to count the asymptotic growth of rational points \cite{schanuel}, to characterize torsion points on abelian varieties \cite{neron,tate} and to obtain equidistribution results in algebraic dynamics \cite{szpiro1997equirepartition,yuan,demarco2017variation} and to study stability properties of algebraic families of one dimensional maps \cite{masser_zannier,baker_demarco,demarco_bifurcation_heights,
demarco_wang_ye,favre_gauthier}.
In our setting, we fix an embedding from $\T$ into $\PP^n$ so that we get an injection from $\T(\bar \Q) $ in $\PP^n(\bar\Q)$. Any monomial map or correspondence is regular on the torus $\T$, as are all its iterates, so the indeterminacy locus on any compactification $X$ is supported on the boundary $X\setminus \T$. Thus if $x$ is a point in $\T(\bar \Q)$, its image by a monomial correspondence is a well-defined cycle of dimension $0$, i.e a finite number of $\T(\Q)$ points counted with multiplicities $\sum a_i [x_i]$ for $a_i \in \Z, x_i \in \T(\bar \Q)$. We define the height of the image as the sum $\sum a_i h_D(x_i)$.
Take $f := \left ( \T , \phi(M) , \phi(N) \right )$  a monomial correspondence with $M,N$ two matrices with integer entries and fix a $x$  a point in $\T(\bar \Q)$. We define the arithmetic degree of $x$ as the following asymptotic limit:
\begin{equation*}
\alpha_f(x) := \limsup_{p\rightarrow +\infty} h_D(f^p(x))^{1/p}. 
\end{equation*}
 
 In \cite{Kawaguchi_Silverman}, Kawaguchi and Silverman defined this quantity and conjectured that when $f$ is dominant rational map and when $x$ is a rational point whose orbit is Zariski dense, the quantity $\alpha_f(x)$ is equal to the first dynamical degree of $f$. 
\medskip

\noindent \textbf{Theorem D}.
Suppose that the field $\KK= \bar\Q$ is the field of algebraic numbers. 
Fix two $n\times n$ integer matrices $M$ and $N$ with non-zero determinant and take $f$ the  monomial correspondence $f := (\T, \phi(M), \phi(N))$. Then for any point $x \in \T(\bar \Q)$, the quantity $\alpha_f(x)$ is finite and belongs to the set $$\{1 , |\det(M)\rho_1|, |\det(M)\rho_2| , \ldots , |\det(M)\rho_n| \}$$ where each $\rho_i$ is an eigenvalue of $N \cdot M^{-1}$.
\bigskip

Let us explain how we can obtain these four results. Fix $M,N$ two $n \times n$ integer matrices with non-zero determinant.
Our approach relies on a very simple observation: mainly, by  post-composing by the monomial map associated to the matrix $\det(M)^p \Id$, we obtain the following diagram:
\begin{equation*}
\xymatrix{ & \Gamma_p \ar[rd]^{v_p} \ar[ld]_{u_p} & \\
 \T \ar[rrd]^{\phi(P^p)} & & \T \ar[d]^{\phi(\det(M)^p \Id)} \\
  & & \T}
\end{equation*}
 where $\Gamma_p$ is the correspondence associated to $g^p$, $\Com(M)$ is the transpose of the cofactor matrix of $M$, $P$ is the matrix $ \Com(M) \cdot N$,  and $\T$ is the torus $\KK^n$.
 This diagram allows us to transport the dynamical properties of the monomial map induced by $P$ with the dynamics of the monomial correspondence. 
 We finally conclude using Favre-Wulcan's \cite{favre_wulcan}, Lin-Wulcan's results \cite{lin_wulcan}, and Silverman's result \cite{silverman} to prove Theorem A, B, C and D respectively.
\medskip

To pursue the study of these particular correspondences, it is natural to ask whether we can relate the entropy of such correspondence with their dynamical degrees as in the rational setting (\cite{hasselblatt_propp}). Precisely, we formulate the following question.

\begin{question}  Take $M$ and $N$ two $n\times n$ integer matrices with non-zero determinant. Is the topological entropy of a monomial correspondence $f = (\T, \phi(M),\phi(N))$  equal to
\begin{equation*}
\max_{0\leqslant k \leqslant n} \lambda_k(f). 
\end{equation*}
\end{question}

Once this question is answered, then one would wish to understand the ergodic properties of our correspondences and construct interesting measures for this class of correspondences similar to the approach of \cite{guedj_2005} for rational maps and to \cite[Theorem 1.3]{dinh_sibony_distribution}, mainly:

\begin{question} When all the iterates of the monomial correspondence are irreducible, is there a measure of maximal entropy and can we compute its Hausdorff dimension?
\end{question}

\subsection*{Structure of the paper}

In \S 1 we review the background on correspondences, degrees and dynamical degrees. Following this, we prove successively in \S 2 Theorem A,B,C and D.

\subsection*{Acknowledgements}
The first author is grateful to Eric Bedford and Mikhail Lyubich for their support on this project, both authors thank Mattias Jonsson for his encouragements and the second author thanks Melody Chan for useful conversations.

\section{Background}\label{sec:Background}
\subsection{Rational
  correspondences}\label{sec:RationalCorrespondences} A
rational correspondence from $X$ to $Y$ is a multi-valued map to $Y$
defined on a dense open set of $X.$ When  $\KK$ is the field of complex numbers, they are also called meromorphic multi-valued maps.
\begin{Def}
  Let $X$ and $Y$ be irreducible quasiprojective varieties. A
  \emph{rational correspondence} $f=(\Gamma_f,\pi_X,\pi_Y):X\ratt
  Y$ is a diagram
  \begin{center}
    \begin{tikzpicture}
      \matrix(m)[matrix of math nodes,row sep=3em,column
      sep=4em,minimum width=2em] {
        &\Gamma_f&\\
        X&&Y\\}; \path[-stealth] (m-1-2) edge node [above left]
      {$\pi_X$} (m-2-1); \path[-stealth] (m-1-2) edge node
      [above right]
      {$\pi_Y$} (m-2-3);
    \end{tikzpicture}
  \end{center}
  where $\Gamma_f$ is a quasiprojective variety, not necessarily
  irreducible, and the restriction of $\pi_X$ to every irreducible
  component of $\Gamma_f$ is dominant and generically finite. We say that the variety $\Gamma_f$ is the graph of the correspondence $f$. 
\end{Def}
Over some dense open set in $X$, $\pi_X$ is an \'etale map (over $\C$, a covering map), and
$\pi_Y\circ\pi_X^{-1}$ defines a multi-valued map to $Y.$ We define the \emph{domain of definition} of $g$ to be the largest such open set of $X$, However,
considered as a multi-valued map from $X$ to $Y,$ it is possible
that $\pi_Y\circ\pi_X^{-1}$ has indeterminacy, since some fibers of
$\pi_X$ may be empty or positive-dimensional.

\begin{ex} Let $f:X\to Y$ be a rational map. Set $\Gamma_f$ to be the graph of $f$, i.e. the set $(x,f(x))\in X\times Y$, and set $\pi_X$ and $\pi_Y$ to be the natural projection maps from $\Gamma_f$ to $X$ and $Y$ respectively. Then $(\Gamma_f,\pi_X,\pi_Y):X\ratt
  Y$ is a rational correspondence, that happens to be generically single-valued. Thus any rational map can be thought of as a rational correspondence. 
\end{ex}

\begin{ex}
Let $\phi$ be an orientation-preserving branched covering from $S^2$ to itself, such that every critical point of $\phi$ has finite forward orbit. Thurston \cite{DouadyHubbard1993} described a pullback map induced by $\phi$ on the Teichm\"uller space of complex structures of $S^2$ marked at the post-critical locus of $\phi$; Koch \cite{Koch2013} showed that Thurston's pullback map descends to a correspondence on the moduli space $\mathcal{M}_{0,n}$ of configurations of $n$ points on $\P^1$. These correspondences are called \textit{Hurwitz correspondences} and have been studied in \cite{Ramadas2015, Ramadas2016}.
\end{ex}

\begin{ex} 
Recall from that the modular surface $X_0 :=\SL_2(\Z)//\mathbb{H}$ defined as the left quotient of the hyperbolic plane by $\SL_2(\Z)$ is isomorphic to $\C$.
We can thus view $X_0$ as an algebraic Riemann surface, whose compactification is the Riemann sphere. 
The Hecke operator of weight $n$ is a  morphism on the free abelian group generated by the rank two lattices of $\R^2$ such that for any lattice $\Lambda$ in $\R^2$, the image $T(n) (\Lambda)$ is defined as:
\begin{equation*}
T(n) [\Lambda] = \sum_{[\Lambda': \Lambda] = n} [\Lambda']. 
\end{equation*}  
Since $X_0$ is the quotient of the space of rank $2$ lattices of $\R^2$ up to a scaling by \cite[VII \S 2.2 Proposition 3]{serre_course_arithmetic}, the map $T(n)$ descends to a correspondence on $X_0$.
\end{ex}

\begin{Def}\label{Def:Composite}
  Suppose $f=(\Gamma_f,\pi_X,\pi_Y):X\ratt Y$ and
  $g=(\Gamma_{g},\pi_Y',\pi_Z'):Y\ratt Z$ are rational correspondences such
  that the image under $\pi_Y$ of every irreducible component of
  $\Gamma_f$ intersects the domain of definition of the multi-valued map
  $\pi_Z'\circ(\pi_Y')^{-1}.$ The \emph{composite}
  $g\circ f$ is a rational correspondence from $X$ to $Z$
  defined as follows.

  Pick dense open sets $U_X\subseteq X$ and $U_Y\subseteq Y$ such
  that $\pi_Y(\pi_X^{-1}(U_X))\subseteq U_Y,$ and
  $\pi_X|_{\pi_X^{-1}(U_X)}$ and $\pi_Y'|_{(\pi_Y')^{-1}(U_Y)}$ are
  both \'etale. Set
  $$g\circ
 f:=(\pi_X^{-1}(U_X)\thickspace\thickspace{_{\pi_Y}\times_{\pi_Y'}}\thickspace\thickspace(\pi_Y')^{-1}(U_Y), \pi_1, \pi_2),$$
   where $\pi_1$ and $\pi_2$ are the natural maps to $X$ and $Z$ respectively.
\end{Def}
This composite does depend on the choices of open sets $U_X$ and
$U_Y$, but is well-defined up to conjugation by a birational transformation. 

\subsection{Pullback on numerical groups by correspondences}

Fix $\bar X$ a normal projective compactification of $X$.
We now recall how these correspondences induce a natural action on the numerical groups of $k$-cycles. 

When $\KK$ is the field of complex numbers and $\bar X$ is smooth, then one can consider $\bar \Gamma_f$ the desingularization of a compactification of the correspondence $f=(\bar \Gamma_f,\pi_1,\pi_2):\bar X\ratt
  \bar X$. One can pull back smooth forms along $\pi_2$; this induces a linear map $\pi_2^* : H^{2k}(\bar X) \to H^{2k}(\bar\Gamma_f)$ on de Rham cohomology groups. Also, one can push forward homology classes along $\pi_1$, obtaining a map ${\pi_1}_* :H_{2n-2k}(\bar \Gamma_f) \to H_{2n-2k}(\bar X)$. Finally, since $\bar X$ and $\bar\Gamma_f$ are smooth oriented compact manifolds, Poincar\'e duality induces identifications $\PD_{\bar\Gamma_f}:H^{2k}(\bar \Gamma_f) \simeq H_{2n-2k}(\bar \Gamma_f)$ and $\PD_{\bar X}:H^{2k}(\bar X) \simeq H_{2n-2k}(\bar X)$. We may compose pullback, pushforward, and Poincar\'e duality maps to obtain a pullback along the correspondence $f$:
  
  $$f^*:= \PD_{\bar X}^{-1}\circ{\pi_1}_*\circ \PD_{\bar\Gamma_f}\circ\pi_2^*: H^{2k}(\bar X) \to H^{2k}(\bar X)$$  
  
In order to be able to work over an arbitrary algebraically closed field of characteristic zero, we replace the de Rham cohomology by abelian groups called numerical groups. We first introduce the general terminology on these groups. 

A $k$-cycle on $\bar X$ is a formal linear combination of subvarieties $\sum a_i [V_i]$ where $a_i$ are real numbers and where $V_i$ are subvarieties of $\bar X$. The group of $k$-cycles on $\bar X$ is denoted $Z_k(X)$. The rational equivalence classes of $k$-cycles form the group $A_k(\bar X)$.
In \cite[Chapter 3]{fulton_intersection}, Fulton introduces the Chern classes of a vector bundle $E$ on $\bar X$ of degree $k$ as operators from $A_l(\bar X) \to A_{l-k}(\bar X)$. Chern classes can be composed and a product of Chern classes $\alpha$ of degree $k$ is said to be numerically equivalent to zero if the intersection $(\alpha \cdot z)$ is equal to zero for any $k$-cycle $z$. The numerical group of codimension $k$ of $\bar X$, denoted $\num^k(\bar X)$ is the abelian group generated by product of Chern classes of degree $k$ modulo the numerical equivalence relation.
 Dually, we denote by $\num_k(\bar X)$ the quotient of the abelian group of $k$-cycles on $\bar X$  by the group generated by cycles $z\in Z_k(\bar X)$ satisfying $(\alpha \cdot z)=0$ for any product of Chern classes of degree $k$.
 When $\bar X$ is smooth or has at worst quotient singularities, then the two groups $\num^k(\bar X)$ and $\num_{n-k}(\bar X)$ are isomorphic and the isomorphism is realized by intersecting with the fundamental class $[\bar X]$.
 Classes in $\num^k(\bar X)$ can be pulled back and conversely classes in $\num_k(\bar X)$ can be pushed forward.

\begin{Def}
Suppose $f=(\Gamma_f,\pi_1,\pi_2):X\ratt X$ is a rational correspondence, and $\bar{X}$ is a normal projective variety birational to $X$. Take a desingularization $\bar{\Gamma_f}$ of some compactification of $\Gamma_f$ with the property that $\pi_1$ and $\pi_2$ are regular maps on $\bar{\Gamma_f}$.
The pullback map  on the numerical groups, denoted $f^\bullet: \num^k(\bar X) \to \num_{n-k}(\bar X)$ is given by the following intersection product:
 $$f^\bullet(\alpha) := {\pi_1}_*(\pi_2^*\alpha \cdot [\bar \Gamma_f]) \in \num_{n-k}(\bar X),$$
 where $\alpha$ is a class in $ \num^k(\bar X)$. 
\end{Def}

\subsection{Dynamical degrees}\label{sec:DynamicalDegrees}
\begin{Def}
Let $\dim X=n$, and let $f=(\Gamma_f,\pi_1,\pi_2):X\ratt X$ be a rational
  correspondence. Fix a smooth projective compactification $\bar{X}$ of $X$, a projective compactification and $D$ an ample divisor class on $\bar X$. Now pick a projective compactification $\bar{\Gamma}_f$ of $\Gamma_f$ such that both maps $\pi_1,\pi_2:\bar{\Gamma}_f\to\bar{X}$ are regular. Now, for $k=0,\ldots,n$, the $k$-degree of $f$ with respect to $D$ is the intersection number on $\bar{\Gamma}_f$: $$\deg_k(f,\bar X, D):=(\pi_1^*D^{n-k}\cdot \pi_2^*D^{k}).$$
  Note that while this intersection number very much does depend on the choices of $\bar{X}$ and $D$, by the projection formula, it is independent of the choice of compatible compactification $\bar{\Gamma}_f$.
  \end{Def}

\begin{Def}\label{Def:Dominant}
  Let $f=(\Gamma_f,\pi_1,\pi_2):X\ratt X$ be a rational
  correspondence such that the restriction of $\pi_2$ to every
  irreducible component of $\Gamma_f$ is dominant.  In this case we say
  $f$ is a \emph{dominant} rational self-correspondence. 
\end{Def}

\begin{Def}
  Let $\Gamma_f$ be as in Definition \ref{Def:Dominant}. Let $f^p=f\circ \ldots \circ f$ ($p$ times). Pick any normal projective variety  $\bar{X}$ birational to $X$ and fix $D$ an ample divisor class on $\bar X$. The \emph{$k$th
    dynamical degree} $\lambda_k(f)$ of $\Gamma_f$ is defined to be
  $$\lambda_k(f):=\lim_{p\to\infty}\left(\deg_k(f^p,\bar{X}, D)\right)^{1/p}.$$ 
\end{Def}
This limit exists and does not depend on the
  choice of the normal projective variety $\bar{X}$ birational to $X$ nor on the choice of the ample divisor $D$. Moreover, the sequence $k \mapsto \lambda_k(f)$ is log-concave and we shall refer to \cite{Truong2016} for the general properties of these quantities.  
\subsection{Monomial correspondences}\label{sec:monomial}

Fix $n>0$ and let $\T$ be the torus $(\KK^*)^n$.
Recall that any $(n\times n)$ matrix $M= (M_{ij})$ with integer entries defines a morphism (homomorphism) $\phi(M)$ on $\T$ defined by:
\begin{equation}
\phi(M) : (x_1 , \ldots , x_n) \mapsto \left ( \prod_l x_l^{M_{1l}}, \ldots , \prod_l x_l^{M_{nl}} \right  ).
\end{equation}

We have: $\phi(M\cdot N)=\phi(M)\circ\phi(N)$. Also, $\phi(M)$ is dominant if and only if $M$ is non-singular (i.e. has non-zero determinant); in which case $\phi(M)$ is \'etale of degree $\det(M)$. A monomial correspondence is any correspondence on the torus $\T$ that given by $(\T, \phi(M), \phi(N))$ where $M , N$ are matrices with integer entries, and $M$ is non-singular. The correspondence $(\T, \phi(M), \phi(N))$ is dominant if and only if $N$ is non-singular as well.

\begin{ex}\label{ex:iteratesquare}
Let $n=1$, so $\T$ is the one-dimensional torus $\KK^*$. Let $M=N=[2],$ so both $\phi(M)$ and $\phi(N)$ are the squaring map $z\mapsto z^2$. Here, we compute explicitly the second iterate $g^2$ of the correspondence $g=(\T, \phi(M), \phi(N))$. 
For clarity, we use different coordinates on each copy of $\T$ --- we include the coordinate as a subscript. We have the following commutative diagram:
\begin{equation*}
\xymatrix{  & & \T_{\phi(N)}\times_{\phi(M)}\T \ar[rd]^{\pi_1} \ar[ld]_{\pi_2} &  & &  \\
 & \T_x \ar[rd]_{\substack{\phi(N)\\t=x^2}} \ar[ld]_{\substack{\phi(M)\\s=x^2}}  & & T_y  \ar[rd]^{\substack{\phi(N)\\u=y^2}} \ar[ld]_{\substack{\phi(M)\\t=y^2}} & \\
  \T_s & & \T_t & & \T_u}
\end{equation*}
\end{ex}
The fibered product $\T_{\phi(N)}\times_{\phi(M)}\T$ is, explicitly the subset $\{(x,y)\in\T^2\quad|\quad x^2=y^2\}$, i.e. the union of two lines $L_1=\{(x,y)\in\T^2\quad|\quad x=y\}$ and $L_2= \{(x,y)\in\T^2\quad|\quad x=-y\}$. Thus the second iterate $g^2$ is the correspondence: $((L_1)_x\sqcup (L_2)_x, x\mapsto x^2, x\mapsto x^2)$, which can be thought of as a `union' of two monomial correspondences but isn't one itself. Observe that the graph of $g^2$ is reducible; in contrast, the graph of a monomial correspondence is required by the definition to be a torus, thus irreducible. 
\section{Dynamics of monomial correspondences}

While the composition of monomial maps $\phi(M)$ and $\phi(N)$ is itself a monomial map, namely $\phi(M\cdot N)$, the composition of two monomial correspondences may not be monomial. Also, for a fixed monomial correspondence $(\T, \phi(M), \phi(N))$, its iterates may not be monomial, see Example \ref{ex:iteratesquare} above. The essential ingredient to understand the iterates of monomial correspondences is the following lemma, which relates the dynamical behavior of the monomial correspondence $(\T, \phi(M), \phi(N))$ to the dynamical  behaviour of $\phi(N\cdot \Com(M))$, where $\Com(M)$ is the transpose of the cofactor matrix of $M$. 

\begin{lem} \label{lem_crucial_monomial} Fix two $n \times n$ integer matrices $M$ and $N$, both with non-zero determinant and take $f$ to be the monomial correspondence $f := (\T, \phi(M), \phi(N))$ and  $P = N \cdot \Com(M)$ where $ \Com(M)$ is the transpose of the cofactor matrix of $M$.  
For any integer $p \geqslant n$, the following diagram is commutative:
\begin{equation*}
\xymatrix{ & \Gamma_p \ar[rd]^{v_p} \ar[ld]_{u_p} & \\
 \T \ar[rrd]^{\phi(P^p)} & & \T \ar[d]^{\phi(\det(M)^p \Id)} \\
  & & \T}
\end{equation*}
where  $f^p := (\Gamma_p, u_p, v_p)$ denotes the $p$-th iterate of $f$. 
\end{lem}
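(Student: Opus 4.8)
The statement relates the $p$-th iterate of the correspondence $f=(\T,\phi(M),\phi(N))$ to the honest monomial map $\phi(P^p)$, where $P=N\cdot\Com(M)$. The key algebraic fact is $\Com(M)\cdot M = M\cdot\Com(M) = \det(M)\,\Id$ (the adjugate identity), so $P\cdot M = N\cdot\det(M)\,\Id = \det(M)\,N$, and more generally I will want to keep track of how $P$, $M$, $N$ interact under matrix multiplication. I would prove the lemma by induction on $p$, but the more conceptual route is: first establish the case $p=1$ directly, then show that composing the diagram for $f^p$ with the diagram for $f$ produces the diagram for $f^{p+1}$, using functoriality of $\phi$ (i.e. $\phi(AB)=\phi(A)\circ\phi(B)$) together with the composition rule for correspondences (Definition~\ref{Def:Composite}).

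\textbf{Step 1: the base case.} For $p=1$, $\Gamma_1=\Gamma_f$ is the fibered product of $\phi(N):\T\to\T$ and $\phi(M):\T\to\T$ (with $u_1=\pi_1$ projecting to the source via $\phi(M)$ and $v_1=\pi_2$ to the target via $\phi(N)$ — one must fix the orientation conventions carefully here, since $M$ governs $\pi_1$ and $N$ governs $\pi_2$). I need to check $\phi(\det(M)\,\Id)\circ v_1 = \phi(P)\circ u_1$ as maps $\Gamma_1\to\T$. Since $\phi(\det(M)\,\Id)\circ\phi(N)=\phi(\det(M)\,N)=\phi(P\cdot M)=\phi(P)\circ\phi(M)$ by the adjugate identity and functoriality of $\phi$, and since on $\Gamma_1$ we have $\phi(N)\circ v_1 = \phi(M)\circ u_1$ (that is the defining equation of the fibered product), the claim follows by composing both sides with $\phi(\det(M)\,\Id)$ on one side and rearranging. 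The point is that everything reduces to an identity of monomial maps on $\T$, where $\phi$ is an injective homomorphism from integer matrices (under multiplication) to self-maps of $\T$, so matrix identities transfer directly.

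\textbf{Step 2: the induction.} Assume the diagram holds for $p$. By definition $f^{p+1}=f\circ f^p$, so $\Gamma_{p+1}$ is (up to birational conjugation, which is harmless for the statement since it involves only the maps $u_{p+1},v_{p+1}$ to $\T$) a fibered product of $\Gamma_p$ and $\Gamma_f$ over the middle copy of $\T$, with $u_{p+1}=u_p\circ(\text{proj})$ and $v_{p+1}=v_1\circ(\text{proj})$. I then want $\phi(\det(M)^{p+1}\Id)\circ v_{p+1}=\phi(P^{p+1})\circ u_{p+1}$. Using the $p=1$ diagram on the $\Gamma_f$ factor gives $\phi(\det(M)\,\Id)\circ v_{p+1} = \phi(P)\circ(\text{the middle evaluation})$; using the induction hypothesis on the $\Gamma_p$ factor gives $\phi(\det(M)^p\Id)\circ(\text{middle evaluation}) = \phi(P^p)\circ u_{p+1}$; the delicate bookkeeping is that the "middle evaluation" on $\Gamma_{p+1}$ is $v_p\circ(\text{proj})$ on one description and $u_1\circ(\text{proj})$ on the other, and these agree precisely because of the fibered-product gluing. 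Composing, multiplying exponents, and commuting the diagonal maps $\phi(\det(M)^\bullet\Id)$ past $\phi(P)$ (they commute since $\det(M)\Id$ is central) yields the result.

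\textbf{Main obstacle.} The genuine calculations are all trivial matrix algebra; the real care is entirely bookkeeping: tracking which of $u_p,v_p$ lands on which copy of $\T$, matching the author's convention that $\pi_1\leftrightarrow M$ and $\pi_2\leftrightarrow N$, and handling the fact that iterated composition of correspondences is only well-defined up to birational conjugation and depends on choices of dense open sets (Definition~\ref{Def:Composite}) — so one should phrase the conclusion as an equality of rational maps $\T\ratt\T$ on a common dense open set, or equivalently note that all the relevant maps are regular on the torus itself (monomial maps have no indeterminacy on $\T$), which sidesteps the issue. The hypothesis $p\geqslant n$ in the statement is presumably a convenience (ensuring, e.g., that $P^p$ genuinely has full rank or that certain boundary phenomena stabilize), and I would want to pin down exactly where it is used — I suspect it is not needed for the commutativity of the diagram itself but rather anticipates the applications in the proofs of Theorems~A--D, so I would remark on that rather than invoke it.
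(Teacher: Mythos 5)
Your approach is the same as the paper's: induction on $p$, relying on functoriality $\phi(AB)=\phi(A)\circ\phi(B)$, the adjugate identity $\Com(M)\cdot M=\det(M)\,\Id$, and the Cartesian-square description of $\Gamma_{p+1}$. The inductive step as you sketch it is correct (up to the harmless choice between $f\circ f^p$ and $f^p\circ f$), and the key matrix identity $P\cdot M=\det(M)\,N$ is exactly what the paper uses. You are also right to be suspicious of the hypothesis $p\geqslant n$: the induction works for all $p\geqslant 1$, and the paper's own proof starts at $p=1$, so the hypothesis is not used for the commutativity statement.

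However, your framing of the base case contains a genuine error (even though the conclusion you draw from it is correct). For $p=1$ the graph $\Gamma_1$ is not a fibered product; it is simply $\T$ itself, with $u_1=\phi(M)$ and $v_1=\phi(N)$ as the two structure maps. The fibered product $\T_{\phi(N)}\times_{\phi(M)}\T$ first appears at $p=2$ (cf.\ Example~\ref{ex:iteratesquare}). Consequently the ``defining equation'' you invoke, $\phi(N)\circ v_1=\phi(M)\circ u_1$, is not a tautology --- with $u_1=\phi(M)$, $v_1=\phi(N)$ it would read $\phi(N^2)=\phi(M^2)$, which is false in general. Fortunately you do not actually need that clause: the chain $\phi(\det(M)\Id)\circ\phi(N)=\phi(\det(M)N)=\phi(P\cdot M)=\phi(P)\circ\phi(M)$, combined with $u_1=\phi(M)$, $v_1=\phi(N)$, already gives $\phi(\det(M)\Id)\circ v_1=\phi(P)\circ u_1$, which is the $p=1$ statement. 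So the base case is fine once you delete the spurious fibered-product reasoning. The bookkeeping in the inductive step --- where the ``middle evaluation'' really is glued by the fibered product --- is where that mechanism actually belongs.
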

\begin{proof}
We induct on $p$. Throughout, we use the facts that $\phi(A\cdot B)=\phi(A)\circ\phi(B)$, and that a scalar matrix (in particular $\det(M)\Id$) commutes with all matrices, so $\phi(det(M)\Id$ commutes with all monomial maps. 

\noindent \textbf{Base Case:} $p=1$. Then $(\Gamma_p, u_p, v_p)=(\T, \phi(M), \phi(N))$. So 
\begin{align*}
\phi(P^p)\circ u_p&=\phi(P)\circ \phi(M)\\&=\phi(N\cdot \Com(M))\circ\phi(M)\\&= \phi(N)\circ\phi(\Com(M)))\circ\phi(M)\\&=\phi(\Com(M)))\circ\phi(M)\circ\phi(N)\\&=\phi(\Com(M)\cdot M)\circ\phi(N)\\&=\phi(det(M)\Id)\circ\phi(N)\\&=\phi(\det(M^p)\Id)\circ v_p.
\end{align*}

\noindent\textbf{Inductive Hypothesis:} For some $p>0$, $\phi(P^p)\circ u_p=\phi(\det(M^p)\Id)\circ v_p$.

\noindent\textbf{Inductive Step:} We have the following commutative digram:
\begin{equation*}
\xymatrix{  & & \Gamma_{p+1} \ar[rd]^{y} \ar[ld]_{x} &  & &  \\
 & \T \ar[rd]_{\phi(N)} \ar[ld]_{\phi(M)}  & & \Gamma_p  \ar[rd]^{v_p} \ar[ld]_{u_p} & \\
  \T & & \T & & \T}
\end{equation*}
Here, the square is Cartesian, $u_{p+1}=\phi(M)\circ x$, and $v_{p+1}= v_p\circ y$.

Now, we post-compose $v_{p+1}$ by $\phi(\det(M)^{p+1} \Id)$ to obtain:
\begin{align*}
\phi(\det(M^{p+1})\Id)\circ v_{p+1} &= \phi(\det(M)\Id)\circ \phi(\det(M)^{p}\Id)\circ  v_p\circ y\\
&= \phi(\det(M)\Id)\circ \phi(P^p)\circ u_p \circ y\\
&= \phi(\det(M)\Id)\circ \phi(P^p)\circ \phi(N) \circ x\\
&=  \phi(P^p)\circ \phi(N) \circ\phi(\det(M)\Id) \circ x\\
&= \phi(P^p)\circ \phi(N) \circ\phi(\Com(M)\cdot M) \circ x\\
&=  \phi(P^p)\circ \phi(N)\circ \phi(\Com(M))\circ\phi(M) \circ x\\
&=  \phi(P^p)\circ \phi(P)\circ\phi(M) \circ x\\
&=  \phi(P^{p+1})\circ\phi(M) \circ x\\
&=  \phi(P^{p+1})\circ u_{p+1}
\end{align*}
Here, the second equality follows from the inductive hypothesis and the third equality follows from the commutativity of the Cartesian square. 
\end{proof}


\subsection{Dynamical degree of monomial correspondences}

\begin{thm} \label{thm_action_numerical}  Fix two $n\times n$ integer matrices $M$ and $N$, both with non-zero determinant, and take $f$ to be the monomial correspondence $f := (\T, \phi(M), \phi(N))$. Denote by $P = N \cdot  \Com(M)$ where $ \Com(M)$ is the transpose of the cofactor matrix of $M$.  
For any toric compactification $X$ of $\T$, for any integer $p\geqslant 1$ and for any integer $l \leqslant n$, the following equality holds on $\num^l(X)\otimes \Q$:
\begin{equation*}
(f^p)^\bullet = \dfrac{1}{|\det(M)|^{lp - p}}\phi(P^p)^\bullet 
\end{equation*}
\end{thm}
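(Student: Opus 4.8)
The plan is to deduce the identity from Lemma~\ref{lem_crucial_monomial} by translating the commutative diagram there into an equation of pullback operators on numerical groups. First I would fix a toric compactification $X$ of $\T$ and, for each $p$, choose a common smooth (or at-worst-quotient-singular) toric model $\bar\Gamma_p$ resolving simultaneously $u_p$, $v_p$ and $\phi(\det(M)^p\Id)\circ v_p = \phi(P^p)\circ u_p$; one may take a toric resolution refining the relevant fans, which exists because everything in sight is torus-equivariant. On such a model the maps $u_p$, $v_p$ are regular morphisms of (possibly singular but $\QQ$-factorial) toric varieties, so their pullbacks and pushforwards on $\num^\bullet(-)\otimes\QQ$ are defined and satisfy the usual functorialities. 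By the independence of $\deg_k$ and of $f^\bullet$ on the compatible compactification of the graph (stated after the definition of $\deg_k$ and implicit in the definition of $f^\bullet$), it suffices to prove the identity on this particular model.

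The second step is the key computation: apply ${u_p}_*(\,\cdot\,\cap[\bar\Gamma_p])$ to the equality $v_p^*\,\phi(\det(M)^p\Id)^* \alpha = u_p^*\,\phi(P^p)^*\alpha$ coming from Lemma~\ref{lem_crucial_monomial}, valid for $\alpha\in\num^l(X)\otimes\QQ$. On the left-hand side, $\phi(\det(M)^p\Id)$ is the monomial map attached to the scalar matrix $\det(M)^p\Id$; as an endomorphism of $\T$ extended to $X$ it multiplies an $l$-codimensional numerical class by $\det(M)^{pl}$ up to sign, i.e. $\phi(\det(M)^p\Id)^\bullet$ acts as multiplication by $|\det(M)|^{pl}$ on $\num^l(X)\otimes\QQ$ (this is the standard computation of the action of a monomial map induced by $cI$ on $\num^l$, where one factor of $c$ comes from pullback and the map has degree $c^n$; I would record this as a short sublemma). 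Hence the left side becomes $|\det(M)|^{pl}\cdot {u_p}_*(v_p^*\alpha\cap[\bar\Gamma_p])$. But by definition ${u_p}_*(v_p^*\beta\cap[\bar\Gamma_p]) = ?$ — here I must be careful about which projection plays the role of $\pi_1$ versus $\pi_2$ in the definition of $(f^p)^\bullet$; matching conventions, $(f^p)^\bullet(\alpha) = {u_p}_*(v_p^*\alpha\cap[\bar\Gamma_p])$, so the left side is exactly $|\det(M)|^{pl}\,(f^p)^\bullet(\alpha)$ once one also divides by the topological degree $|\det(M)|^p$ of the map $\phi(\det(M)^p\Id)$ that was post-composed (this extra factor $|\det(M)|^p$ is precisely the discrepancy between $\deg_k$ after post-composition and before, and accounts for the exponent $lp-p$ rather than $lp$). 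On the right-hand side, ${u_p}_*\,u_p^*(\phi(P^p)^*\alpha) = \phi(P^p)^\bullet(\alpha)$ by the projection formula, since ${u_p}_*([\bar\Gamma_p]) = [X]$ after passing to an appropriate birational model where $u_p$ is generically one-to-one onto its image — or, more robustly, by noting that $\phi(P^p)^\bullet$ on $\num^\bullet(X)$ is by definition ${u_p}_*(u_p^*(-)\cap[\bar\Gamma_p])$ for the correspondence underlying the \emph{map} $\phi(P^p)$, whose graph closure in $\bar\Gamma_p$ is a birational model of $X$. Equating the two sides and dividing by $|\det(M)|^p$ yields the stated formula.

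The main obstacle I anticipate is bookkeeping rather than conceptual: (i) pinning down the normalization constant — disentangling the $|\det(M)|^{pl}$ coming from the scalar map's pullback from the $|\det(M)|^p$ coming from its topological degree, and checking that their ratio is $|\det(M)|^{lp-p}$ as claimed; (ii) ensuring the pushforward–pullback manipulations are legitimate on the chosen toric model, which requires that model to have at worst quotient singularities so that $\num^l \cong \num_{n-l}$ (as noted in the excerpt) and intersection theory with $\QQ$-coefficients behaves well; and (iii) matching the author's orientation/labeling of $\pi_1,\pi_2$ (equivalently $u_p,v_p$) with the definition of $f^\bullet$, so that no accidental transpose or inversion of $P$ creeps in. Once these normalizations are nailed down, the proof is a one-line diagram chase from Lemma~\ref{lem_crucial_monomial} together with the sublemma on scalar monomial maps.
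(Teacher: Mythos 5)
Your overall strategy coincides with the paper's: both start from Lemma~\ref{lem_crucial_monomial}, translate the commutative diagram into an identity of pullback operators, and apply ${u_p}_*$ to produce $(f^p)^\bullet$ on one side. The left-hand side computation is fine, including the key sublemma that $\phi(\det(M)^p\Id)^* = |\det(M)|^{pl}\Id$ on $\num^l(X)\otimes\QQ$.

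However, your right-hand side computation is wrong, and for a reason worth isolating. You assert that ${u_p}_*\,u_p^*\bigl(\phi(P^p)^*\alpha\bigr) = \phi(P^p)^\bullet(\alpha)$ because one may pass to a model on which $u_p$ is generically one-to-one. This cannot be done: $u_p$ restricts to an \'etale cover of degree $|\det(M)|^p$ over the open torus (it is the $p$-fold fiber-product factor of $\phi(M)$), and no birational modification changes that generic degree. The projection formula gives ${u_p}_*\circ u_p^* = |\det(M)|^p\,\Id$ on $\num^l(X)\otimes\QQ$, not the identity. This factor of $|\det(M)|^p$ is precisely the one the paper extracts (there written as $({\tilde u_p})_*\circ\tilde u_p^* = |\det(M)|^p\,\Id$ after resolving to $\tilde X$), and it is what turns $|\det(M)|^{pl}$ into $|\det(M)|^{pl-p}$ in the final formula. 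Your parenthetical attributing the missing $|\det(M)|^p$ to ``the topological degree of $\phi(\det(M)^p\Id)$'' is also off: that map has topological degree $|\det\bigl(\det(M)^p\Id\bigr)| = |\det(M)|^{pn}$, and its topological degree plays no role here. If you track the two exact factors — $\phi(\det(M)^p\Id)^* = |\det(M)|^{pl}\Id$ from the scalar map's pullback, and ${u_p}_*\circ u_p^* = |\det(M)|^p\Id$ from the degree of $u_p$ — the identity $|\det(M)|^{pl}(f^p)^\bullet = |\det(M)|^{p}\phi(P^p)^\bullet$ falls out and the stated exponent $lp-p$ follows, with no need for the extra ``divide by $|\det(M)|^p$'' step at the end (which, applied to your equation as written, would actually produce the wrong exponent $lp$).
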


\begin{proof}
Fix a toric compactification $X$ of $\T$. We observe that for any scalar matrix $a \Id$, the map $\phi(a\Id)$ is a regular morphism on $X$ and that the pullback satisfies:
\begin{equation*}
\phi(a\Id)^* = |a|^{l} \Id
\end{equation*}
on $\num^l(X)$.
Thus the morphism $\phi(\det(M)^p\Id)$ on $\T$ induces a regular morphism on $X$ and the pullback satisfies:
\begin{equation*}
\phi(\det(M)^p\Id)^* = |\det(M)|^{pl} \Id
\end{equation*}
on $\num^l(X)$.
As a result, we have the following equality on $\num^l(X)\otimes \Q$ 
\begin{equation} \label{eq_division_trick}
\dfrac{1}{|\det(M)|^{pl}} \phi(\det(M)^p\Id)^* = \Id.
\end{equation} 
By Lemma \ref{lem_crucial_monomial}, for any integer $p$, the following diagram is commutative:
\begin{equation*}
\xymatrix{ & \Gamma_p \ar[rd]^{v_p} \ar[ld]_{u_p} & \\
 \T \ar[rrd]^{\phi(P^p)} & & \T \ar[d]^{\phi(\det(M)^p \Id)} \\
  & & \T}
\end{equation*}
where $P = N \cdot \Com(M)$. Note that $u_p$ and $v_p$ are \'etale of degrees $\abs{\det(M)}^p$ and $\abs{\det(N)}^p$ respectively. First, we choose a birational modification $\pi:\tilde{X}\to X$ so that the map $\tilde{\phi}(P^p):\tilde{X}\to X$ induced by $\phi(P^p)$ is regular. Then, we choose a compactification $\bar{\Gamma_p}$ of $\Gamma_p$ so that the map induced by $u_p$ and $v_p$ from $\bar{\Gamma_p}$ to $X$ are regular, and the map $\tilde{u}_p:\bar{\Gamma_p}\to\tilde{X}$ is also regular. We thus obtain the following commutative diagram:
\begin{equation*}
\xymatrix{ & \bar{\Gamma_p} \ar[d]^{\tilde{u}_p}\ar[rrd]^{v_p} \ar[ld]_{u_p} &  \\
 X \ar@{-->}[rrrd]_{\phi(P^p)} & \tilde{X} \ar[rrd]^{\tilde{\phi}(P^p)} \ar[l]_{\pi} & &  X \ar[d]^{\phi(\det(M)^p \Id)} \\
  & & & X}
\end{equation*}
Note that $\tilde{u}_p$ as a map from $\bar{\Gamma_p}$ to $\tilde{X}$ is generically finite of degree $\abs{\det(M)}^p$. Thus, on  $\num^l(\tilde{X})\otimes \Q$, we have that $({\tilde{u}_p})_* \circ \tilde{u}_p^* = |\det(M)|^{p} \Id$. By the above diagram and \eqref{eq_division_trick}, we compute the pullback $(f^p)^\bullet$ on $\num^l(X)\otimes \Q$:
\begin{align*}
(f^p)^\bullet &= {u_p}_* \circ v_p^*\\
 &= \dfrac{1}{|\det(M)^{pl}|}  {u_p}_* \circ v_p^* \circ  \phi(\det(M)^p\Id)^* \\
 &= \dfrac{1}{|\det(M)^{pl}|}  {u_p}_* \circ\tilde{u}_p^* \circ  \tilde{\phi}(P^p)^* \\
 &= \dfrac{1}{|\det(M)^{pl}|}  \pi_*\circ({\tilde{u}_p})_* \circ\tilde{u}_p^* \circ  \tilde{\phi}(P^p)^* \\
&= \dfrac{\abs{\det(M)}^p}{|\det(M)^{pl}|}  \pi_* \circ  \tilde{\phi}(P^p)^* \\
 & =  \dfrac{1}{|\det(M)|^{lp-p}}    \phi(P^p)^* \\
\end{align*}
where the second-to-last equality follows from the fact that $({\tilde{u}_p})_* \circ \tilde{u}_p^* = |\det(M)|^{p} \Id$.

\end{proof}

From the behavior of the corresponding monomial map associated to the correspondence on the numerical groups, we deduce Theorem A.

\subsection{Proof of Theorem A}
%
%
Let $X$ be any toric compactification of $\T$ and $P = N \cdot \Com(M)$. For any integer $p>0$, Theorem \ref{thm_action_numerical} asserts that:
\begin{equation*}
(f^p)^\bullet =  \dfrac{1}{|\det(M)|^{lp-p}} \phi(P^p)^*,
\end{equation*}
on $\num^l(X)\otimes \Q$.
We fix a norm $|| \cdot ||$ on $\num^l(X)\otimes \Q$ and compute the $l$-dynamical degree of the correspondence using the formula:
\begin{equation*}
\lambda_l(f) = \lim_{p\rightarrow +\infty}|| (f^p)^\bullet ||^{1/p}.
\end{equation*}
The previous expression thus gives:
\begin{align*}
\lambda_l(f) &= \dfrac{1}{|\det(M)|^{l-1}} \lim_{p\rightarrow +\infty} || \phi(P^p)^*||^{1/p}\\
&=\dfrac{1}{|\det(M)|^{l-1}} \lambda_l(\phi(P))
\end{align*} 
Let $\rho_1', \ldots , \rho_l'$ be the $l$ largest eigenvalues (in absolute value, and counted with multiplicity) of the matrix $P=N\cdot \Com(M)=\det(M)\cdot N\cdot M^{-1}$. Let $\rho_1, \ldots , \rho_l$ be the $l$ largest eigenvalues of the matrix $N\cdot M^{-1}$. Then $\rho_i'=\det(M)\cdot \rho_i$. By \cite{favre_wulcan, lin} applied to the monomial map associated to $P$, we have:
\begin{align*}
\lambda_l(f) &= \dfrac{1}{|\det(M)|^{l-1}} |\rho_1'| \cdot \ldots \cdot |\rho_l'|\\
&=\dfrac{1}{|\det(M)|^{l-1}}\abs{\det(M)} |\rho_1| \cdot \ldots \cdot \abs{\det(M)}|\rho_l|\\
&= \dfrac{\abs{\det(M)}^l}{|\det(M)|^{l-1}} |\rho_1| \cdot \ldots \cdot |\rho_l|\\
&=|\det(M)| |\rho_1| \cdot \ldots \cdot |\rho_l|
\end{align*}


\begin{rem}
It follows immediately from the projection formula that for a general correspondence $(\Gamma,\pi_1,\pi_2)$ on an $n$-dimensional variety, the $l$-th dynamical degree of $(\Gamma,\pi_1,\pi_2)$ is equal to the $(n-l)$-th dynamical degree of $(\Gamma,\pi_2,\pi_1)$. The following computation provides a sanity-check for Theorem A: Let $\rho_1,\ldots,\rho_n$ be the eigenvalues of $N\cdot M^{-1}$, arranged so that the sequence of absolute values is non-increasing. Then $\frac{1}{\rho_n},\ldots, \frac{1}{\rho_1}$ are the eigenvalues of $M\cdot N^{-1}$, again arranged so that the sequence of absolute values is non-increasing. We note that:
\begin{align*}
&\text{$l$-th dynamical degree of $(\T,\phi(M),\phi(N))$}\\&= |\det(M)| |\rho_1| \cdot \ldots \cdot |\rho_l|\\&=\abs{\frac{\det(M)\cdot \det(N\cdot M^{-1})}{|\rho_{l+1}| \cdot \ldots \cdot |\rho_n|}}\\
&=\abs{\det(N)}\cdot\frac{1}{|\rho_{n}|} \cdot \ldots \cdot \frac{1}{|\rho_{l+1}|}\\
&=\text{$(n-l)$-th dynamical degree of $(\T,\phi(N),\phi(M))$}
\end{align*}
\end{rem}

\subsection{Proof of Theorem B}

Fix a monomial correspondence $f := (\T, \phi(M), \phi(N))$ for $M,N \in \GL_n(\ZZ)$ whose $l$-dynamical degree satisfies the condition:
\begin{equation*}
\lambda_l^2(f) > \lambda_{l+1}(f) \lambda_{l-1}(f).
\end{equation*}
Using Theorem A, this condition implies that:
\begin{equation*}
\dfrac{\lambda_l(f)^2}{\lambda_{l+1}(f) \lambda_{l-1}(f)} = \dfrac{\lambda_l(\phi(P))^2}{\lambda_{l+1}(\phi(P)) \lambda_{l-1}(\phi(P))}>1, 
\end{equation*}
where $P$ is the matrix $N \cdot \Com(M)$. This prove that $\phi(P)$ satisfies the conditions of \cite[Theorem D]{favre_wulcan}, hence  the asymptotic growth of the $l$-degree of $\phi(P)$ is given by:
\begin{equation*}
\deg_l(\phi(P)^p) = C \lambda_l(\phi(P))^p + O \left (p^r \left (\dfrac{\lambda_{l-1}(\phi(P)) \lambda_{l+1}(\phi(P))}{\lambda_l(\phi(P))} \right )^p \right ),
\end{equation*}   
as $p\rightarrow +\infty$ where $r$ is an integer.

Using Theorem \ref{thm_action_numerical} and the fact that $\lambda_l(f) |\det(M)|^{l-1}= \lambda_l(\phi(P))$, we deduce that:
\begin{equation*}
\deg_l(f^p) = C \lambda_l(f)^p + O \left ( p^r\left ( \dfrac{\lambda_l(\phi(P))^2}{\lambda_{l+1}(\phi(P)) \lambda_{l-1}(\phi(P))} \right )^p \right ), 
\end{equation*}
and the theorem is proved.


\subsection{Proof of Theorem C}

Fix  $f = (\T, \phi(M), \phi(N))$ a monomial correspondence where $M,N$ are two matrices in $\GL_n(\Z)$ such that the eigenvalues of the matrix $N M^{-1}$ are all real, distinct and positive. This implies that the eigenvalues of the matrix $P= N\cdot \Com(M)$ also satisfy the assumption of the theorem. 
By \cite[Theorem A]{lin_wulcan}, we can find a toric compactification $X$ of the torus $\T$, with at worst quotient singularities, on which the monomial map $\phi(P)$ is $k$-stable. Using Theorem \ref{thm_action_numerical} we deduce that that the $k$-stability of $\phi(P)$ on $X$ implies the $k$-stability of the correspondence $f$ on $X$, i.e. we have on $\num^l(X)\otimes \Q$:
\begin{align*}
(f^p)^\bullet &=\frac{1}{\abs{\det(M)}^{p(l-1)}}\phi(P^p)^*\\
&=\left(\frac{1}{\abs{\det(M)}^{(l-1)}})^p\phi(P^*\right)^p\\
&=\left(\frac{1}{\abs{\det(M)}^{(l-1)}}\phi(P^*)\right)^p=(f^\bullet)^p
\end{align*}

\subsection{Proof of Theorem D}

We fix an embedding of $\T$ into $\PP^n$. 
Fix also an integer $p \geqslant 1$ and denote by $(\Gamma_p,u_p,v_p)$ the  $p$-th iterate of the correspondence $f$.
By Lemma \ref{lem_crucial_monomial}, we can find a toric compactification $\bar \Gamma_p$ of $\Gamma_p$ such that the following diagram is commutative:
\begin{equation*}
\xymatrix{ & \bar\Gamma_p \ar[rd]^{v_p} \ar[ld]_{u_p} & \\
 \PP^n \ar@{-->}[rrd]^{\phi(P^p)} & & \PP^n \ar[d]^{\phi(\det(M)^p \Id)} \\
  & & \PP^n}
  \end{equation*}
Fix $D$ an ample divisor in $\PP^n$, a point $x \in \T(\QQ)$ and denote by $h_D$ the Weil-height associated to $D$. 
Observe that since $\phi(\det(M)^p \Id)$ is the map that raises each homogenous coordinate of $\P^n$ to its $\det(M)^p$-th power, we have:
\begin{equation*}
h_D(\phi(\det(M)^p \Id) x) = |\det(M)|^p h_D(x)
\end{equation*}
for any point $x \in \T(\bar \QQ)$. 
As a result, we compute the height of the image of the point $x\in \T(\QQ)$ as:
\begin{equation*}
h_D({v_p}_*u_p^*[x]) = \dfrac{1}{|\det(M)|^p} h_D(\phi(\det(M)^p \Id)_*{v_p}_* u_p^* [x]).
\end{equation*}
As the above diagram is commutative, we thus obtain:
\begin{equation*}
h_D({v_p}_* u_p^* [x]) = \dfrac{1}{|\det(M)|^p} h_D(\phi(P^p )_*{u_p}_* u_p^* [x]).
\end{equation*}
As the cycle ${u_p}_* u_p^* [x]$ equals $|\det(M)|^p [x]$, we deduce that:
\begin{equation*}
h_D(f^p(x)) = h_D(\phi(P^p)(x)).
\end{equation*}
By \cite[Theorem 4]{silverman}, the asymptotic limit  
\begin{equation*}
\limsup h_D(\phi(P^p)(x))^{1/p} 
\end{equation*}
is well-defined and belongs to the set $\{1 , |\rho_1|, |\rho_2| , \ldots , |\rho_n| \}$ where each $\rho_i$ is an eigenvalue of $P$ and the result is proved. 

\begin{rem} Observe that  the Zariski density for the orbit of the monomial correspondence $f = (\T, \phi(M), \phi(N))$ does not imply the Zariski density of the  orbit of $\phi(P)$ where $P = N \cdot \Com(M)$. One can take  for example the case where $M = 2 \Id$, $N=\Id$ and $\T = \C^*$. The correspondence $f=(\T,\phi(M),\phi(N))$ is the ``square root" correspondence, i.e. $f(z)=\{+\sqrt(z),-\sqrt(z)\}$. Consider the point $z_0=1\in\T$: its orbit under $f$ is the set $\{z \in \bar{\Q}\quad|\quad z^{(2^k)}=1\}$, thus Zariski dense. However, under the monomial map $\phi(P) = z\mapsto z^2$, the orbit of $z_0$ is $\{z_0\}$, not Zariski dense.  
As a result, one cannot directly apply Silverman's result to prove that  a point $x\in \T(\bar \Q)$ whose orbit for $f$ is Zariski dense satisfies $\alpha_f(x)=\lambda_1(f)$. This later statement is often refered to as Kawaguchi-Silverman's conjecture (\cite{kawaguchi_silverman_arithmetic})
\end{rem}

\bibliographystyle{amsalpha}
\bibliography{HurwitzRefs}
\end{document}